\documentclass[12pt]{article}
\usepackage{amsmath,amssymb, amsfonts, amsthm,amscd}
\usepackage[T2A]{fontenc}
\usepackage[cp1251]{inputenc}
\usepackage[english]{babel}
\usepackage{graphicx}

\pagestyle{empty}
\theoremstyle{plain}
\newtheorem{theorem}{Theorem}

\newtheorem{proposition}{Proposition}

\theoremstyle{definition}
\newtheorem{definition}{Definition}

\begin{document}

\begin{center}
{\huge Diagonal reduction of matrices over Bezout rings of stable range 1 with the Kazimirsky condition}
\end{center}
\vskip 0.1cm \centerline{{\Large Bohdan Zabavsky, \; Oleh Romaniv}}

\vskip 0.3cm

\centerline{\footnotesize{Department of Mechanics and Mathematics, Ivan Franko National University}}
\centerline{\footnotesize{Lviv, 79000, Ukraine}}
 \centerline{\footnotesize{zabavskii@gmail.com, \; oleh.romaniv@lnu.edu.ua}}
\vskip 0.5cm

\centerline{\footnotesize{March, 2019}}
\vskip 0.7cm

\footnotesize{\noindent\textbf{Abstract:} \textit{We constuct the theory of diagonalizability for matrices over Bezout rings of stable range 1 with the Kazimirsky condition. It is shown that a ring of stable range 1 with the right (left)  Kazimirsky condition is an elementary divisor ring if and only if it is a duo ring.} }

\vspace{1ex}
\footnotesize{\noindent\textbf{Key words and phrases:} \textit{ Bezout ring;  Kazimirsky condition;  elementary divisor ring;  stable range; duo ring.}

}

\vspace{1ex}
\noindent{\textbf{Mathematics Subject Classification}}: 06F20, 13F99.

\vspace{1,5truecm}

\normalsize


    Elementary divisor rings have been studied by many authors. The most full history of this rings can be found in \cite{1BVZmono2012}. Recall by I.~Kaplansky: a ring $R$ is called an elementary divisor ring if for an arbitrary matrix $A$ over $R$ there exist invertible matrices $P$ and $Q$ of suitable sizes   such that $PAQ=D$ is a diagonal matrix $D=(d_i)$ and $Rd_{i+1}R\subseteq Rd_i\cap d_iR$ for each $i$ \cite{5kaplansky1949}. In the case of commutative rings there are many developments on this rings, in the case of noncommutative rings they are  little investigated and fragmented. A general picture is far from its full description. Among these results one should especially note the result \cite{2zabkom1990} which shows that a distributive elementary divisor domain is duo. Tuganbaev extended this result in case of a distributive ring \cite{3tugan1991}.

    In \cite{4henriks1973} Henriksen proved that if $R$ is a unit regular ring than every matrix over $R$ admits diagonal reduction. Let's note that a regular ring is a unit regular if and only if this ring is a ring of stable range 1. Recall, that an associative ring $R$ with identity ($1\ne0$) is a ring of stable range 1 if for any $a,b\in R$ such that $aR+bR=R$ there exist $t\in R$ such that $(a+bt)R=R$\cite{5chen2011}. It is an open problem: when a ring of stable range 1 is an elementary divisor ring?

    In the case of commutative rings Kaplansky  announced that a commutative ring of stable range 1 is an elementary divisor ring if and only if it is a Bezout ring.

    An important role in studying  the elementary divisor rings are played  by the Hermite rings. A ring is called a right (left) Hermite ring if all $1\times 2$ ($2\times 1$) matrices have diagonal reduction, i.e. a ring $R$ is called a right (left) Hermite ring if for any $a,b\in R$ there exist invertible matrices $P$ and $Q$ such that $(a,b) P=(d,0)$ (\,$Q\binom{a}{b}=\binom{c}{0}$) for some elements $d,c\in R$.      An Hermite ring is a ring which is both right and left Hermite ring.

    The property of ring of being Hermite plays an important role in considering the possibility of diagonal reduction of matrices, since this condition is present in all known necessary and sufficient conditions of theorems on elementary divisor rings.

    Note that any Hermite ring is a finitely generated principal ideal ring, i.e. a ring in which an arbitrary finitely generated right or left ideal is principal (in modern terminology these rings are called Bezout rings). You should not think that an arbitrary Bezout ring of stable range 1 is an elementary divisor ring. Dubrovin \cite{6dubrov1986}   showed that any semilocal and semiprimary Bezout ring $R$ is an elementary divisor ring if and only if for any element $a\in R$ there exists $b\in R$ such that $RaR=bR=Rb$.  Note that a semilocal ring is a ring of stable range 1 and also note that a right (left) Bezout ring of stable range~1 is a right (left) Hermite ring \cite{5chen2011}.

    In modern research on the theory of rings a peculiar place is occupied by the rings in which every invertible element lies in the center of the ring, i.e. unit-central rings \cite{7khmsr2009}. These same authors raised an open question: is every unit-central ring with stable range~1 commutative?

    In this paper we consider Bezout rings of stable range 1 which Kazimirsky condition (which are generalize unit-central rings).
    \medskip

    Let $R$ be an associative ring with identity ($1\ne 0$).

\begin{definition}
    A ring $R$ is a ring with the right (left) Kazimirsky condition if for any $a\in R$ and any invertible element $u\in R$ the following inclusion holds $aR\supseteq uaR$ ($Ra\supseteq Rau$). If $R$ is a ring with the right and left Kazimirsky condition than we say, that $R$ is a ring with the Kazimirsky condition.
\end{definition}

    Obviously, an unit-central ring is a ring with the Kazimirsky condition.

\begin{proposition}\label{prop-1}
    Let $R$ be a ring of stable range 1. Then $Ra=Rau$ for any $a\in R$ and any invertible $u\in R$ if and only if $au=va$ for some invertible $v\in R$.
\end{proposition}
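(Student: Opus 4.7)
The plan is to treat the two directions separately; the easy one is immediate, and the hard one reduces to a Kaplansky--Warfield type argument showing that two generators of the same one-sided ideal in a ring of stable range~$1$ are unit-associates.

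For the easy direction, if $au = va$ with $v$ invertible, then $Rv = R$, and hence $Rau = Rva = Ra$.

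For the converse, assume $Ra = Rau$ and extract elements $y, z \in R$ with $au = ya$ and $a = zau$. Substituting the first relation into the second gives $a = zya$, i.e.\ $(1-zy)a = 0$. The idea is now to modify $y$ into a unit $v$ without disturbing the equation $ya = au$: for any $t \in R$, the element $v := y + t(1-zy)$ satisfies
\[
va \;=\; ya + t(1-zy)a \;=\; ya \;=\; au
\]
automatically, so it suffices to choose $t$ making $v$ invertible.

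To do that, I would use the one-sided coprimeness $Ry + R(1-zy) = R$, which is immediate from $zy = z\cdot y \in Ry$ and $1 = zy + (1-zy)$. Applying the left version of the stable range~$1$ condition (equivalent to the right-sided definition given in the text by Vaserstein's theorem) yields $t \in R$ for which $y + t(1-zy)$ is left invertible, and Dedekind finiteness --- a standard consequence of stable range~$1$ --- upgrades left invertibility to invertibility. The point that needs the most care is the choice of side of the correction: placing $(1-zy)$ to the right of $t$ is exactly what lets the annihilation $(1-zy)a = 0$ cancel, and it is also what makes the appeal to \emph{left} stable range~$1$ (rather than the right-sided version stated in the text) unavoidable; no other nontrivial ingredient is needed.
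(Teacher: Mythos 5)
Your proof is correct and follows essentially the same route as the paper: the easy direction is identical, and for the converse the paper simply cites its reference for the standard fact that in a ring of stable range~1 two generators of the same principal left ideal differ by a unit, which is exactly the fact your $v = y + t(1-zy)$ correction argument proves in-line. Your supporting steps (left--right symmetry of stable range~1 via Vaserstein and the upgrade of one-sided invertibility via Dedekind finiteness, itself a consequence of stable range~1) are all valid, so your write-up is in effect a self-contained version of the paper's citation-based proof.
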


\begin{proof}
    If $au=va$ for some invertible $v\in R$, than obviously $Rau=Ra$.

    Let $Ra=Rau$. Since $R$ is a ring of stable range 1 than by \cite{1BVZmono2012} we have $va=au$ for some invertible $v\in R$.
\end{proof}

\begin{proposition}\label{prop-2}
    Let for any $a\in R$ and any invertible element  $u\in R$ there exists $x\in R$ such that $1+xa=u$. Then $Rau\subseteq Ra$ for any invertible $u\in R$ and any $a\in R$.
\end{proposition}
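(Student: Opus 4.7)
The plan is to use the hypothesis directly and rely on a single associativity manipulation. Given any $a\in R$ and any invertible $u\in R$, the hypothesis supplies some $x\in R$ with $u = 1 + xa$. I would then simply compute
\[
au \;=\; a(1+xa) \;=\; a + a(xa) \;=\; a + (ax)a \;=\; (1+ax)\,a,
\]
which exhibits $au$ as an element of $Ra$. Once $au\in Ra$, the inclusion $Rau\subseteq R\cdot Ra\subseteq Ra$ is immediate, which is what we wanted.

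The only subtle point — and the one on which the whole argument hinges — is the move from $a(xa)$ to $(ax)a$, i.e.\ the fact that even in the noncommutative setting the associativity of multiplication lets us repackage the middle factor. Because the hypothesis produces an element $x$ for which $xa$ appears on the \emph{right} of $1$ (so that $u=1+xa$), multiplying by $a$ on the left puts the ``problematic'' factor $a$ outside on the right, which is precisely the side needed to land in the left ideal $Ra$. Had the hypothesis instead produced $u = 1 + ax$, the analogous computation would give $au=a+a(ax)$, which would not obviously lie in $Ra$.

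There is essentially no obstacle to carry out; the proposition is really a one-line observation once the hypothesis is invoked. The only thing to be careful about is to phrase the conclusion as an inclusion $Rau\subseteq Ra$ rather than an equality, since we have only verified that every generator $au$ of $Rau$ lies in $Ra$, and the reverse inclusion is not claimed (and not needed for the sequel, where this proposition will presumably be combined with Proposition~\ref{prop-1} and the Kazimirsky condition).
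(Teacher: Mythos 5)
Your computation is exactly the paper's own argument: multiply $u=1+xa$ on the left by $a$ to get $au=a+axa=(1+ax)a\in Ra$, hence $Rau\subseteq Ra$. The proposal is correct and follows the same route, only spelling out the associativity step more explicitly.
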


\begin{proof}
    Since $1+xa=u$, than $a+axa=au$ and $au\in Ra$, i.e. $Rau\subseteq Ra$.
\end{proof}

\begin{theorem}\label{theor-1}
    Let $R$ be a ring of stable range 1 with the left Kazimirsky condition. Than for any $a,b\in R$ if $aR+bR=R$, then $Ra+Rb=R$.
\end{theorem}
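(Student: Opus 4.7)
The plan has two main ingredients: stable range $1$ (to produce a unit) and the left Kazimirsky condition, which via Proposition~\ref{prop-1} lets one transport invertible factors across elements from the right to the left.

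\textbf{Step 1.} Apply stable range~$1$ to $aR + bR = R$: there exists $t \in R$ with $u := a + bt \in U(R)$.

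\textbf{Step 2.} Right-multiplying $a + bt = u$ by $u^{-1}$ gives
\[
au^{-1} + (bt)u^{-1} = 1.
\]

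\textbf{Step 3.} Since $R$ has stable range~$1$ and left Kazimirsky, Proposition~\ref{prop-1} (applied in the form $Rx = Rxy$ for every $x \in R$ and $y \in U(R)$, then reading off the invertible coefficient) gives: for each $x \in R$ and each $y \in U(R)$, there exists $\tilde y \in U(R)$ with $xy = \tilde y\,x$. Applied to $(a,u^{-1})$ and to $(bt,u^{-1})$ this produces $v,w \in U(R)$ with $au^{-1} = va$ and $(bt)u^{-1} = w(bt)$. Substituting in the identity of Step~2 yields
\[
va + w(bt) = 1, \qquad v,w \in U(R),
\]
so $1 \in Ra + R(bt)$.

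\textbf{Step 4 (the decisive step).} The remaining task is to replace $R(bt)$ by $Rb$; equivalently, to show $bt \in Ra + Rb$. For this I would apply Proposition~\ref{prop-1} also to $b$ and $u$, obtaining $\eta \in U(R)$ with $bu = \eta b$. Expanding $u = a + bt$ gives
\[
ba + b^{2}t = \eta b \in Rb,
\]
and analogously Kazimirsky on $a$ and $u$ yields $au = \alpha a$ for some $\alpha \in U(R)$, hence $abt = (\alpha - a)a \in Ra$. Combining these two identities with $1 = va + w(bt)$, and using that both $Ra$ and $Rb$ are right-invariant under $U(R)$ (a direct consequence of the Kazimirsky condition), should allow one to rewrite $w(bt)$, modulo $Ra$, as an element of $Rb$, thereby exhibiting $1 \in Ra + Rb$.

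The main obstacle lies in Step~4. The element $t$ is in general \emph{not} a unit, so the Kazimirsky condition does not by itself slide $t$ across $b$. The proof must use the specific relation $u = a + bt \in U(R)$ together with the two Kazimirsky identities $au = \alpha a$ and $bu = \eta b$ (with respect to the \emph{same} unit $u$) to eliminate $t$ from the system. This is the only place where both hypotheses — stable range~$1$ and left Kazimirsky — are used in an essential, intertwined way.
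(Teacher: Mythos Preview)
Your Step~4 is not a proof: it is an honest description of an obstacle you have not overcome. The two Kazimirsky identities you record, $bu=\eta b$ and $au=\alpha a$, only yield $b^{2}t=\eta b-ba\in Ra+Rb$ and $abt\in Ra$; neither forces $bt\in Ra+Rb$, and no amount of juggling $va+w(bt)=1$ with right $U(R)$-invariance of $Ra$ and $Rb$ will manufacture a left coefficient for $bt$ that lands in $Rb$, because $t$ is not a unit and Kazimirsky says nothing about it. So the argument, as written, has a genuine gap exactly where you flag it.

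The missing idea is a \emph{second} application of stable range~$1$. From $a+bt=u\in U(R)$ one has $u^{-1}a+(u^{-1}b)t=1$, so $Ra+Rt=R$; left stable range~$1$ (which is equivalent to the right version) then produces $x\in R$ with $xa+t=w\in U(R)$. Now substitute $t=w-xa$ back into $u=a+bt$ to get
\[
u=(1-bx)a+bw,
\]
and a single use of the left Kazimirsky condition on the pair $(b,w)$ gives $bw\in Rb$, whence $u\in Ra+Rb$. This is precisely the paper's argument. Note also that your Steps~2--3 are a detour: left-multiplying $a+bt=u$ by $u^{-1}$ already gives $1\in Ra+Rbt$ without ever invoking Proposition~\ref{prop-1}; the real work is eliminating $t$, and that requires the second stable-range step, not Kazimirsky transport of $u^{-1}$.
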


\begin{proof}
    Let $aR+bR=R$. Since $R$ is a ring of stable range 1 then $a+bt=u$ for some $t\in R$ and an invertible $u\in R$. Than $Ra+Rt=R$, i.e. $ax+t=w$ for some $x\in R$ and an invertible $w\in R$. Since $t=w-xa$, we have $u=a+b(w-xa)=a+bw-bxa=(a-bx)a+bw$. Since $R$ is a ring with the left Kazimirsky condition we have $Rbw\subseteq Rb$, i.e. $bw=yb$ for some $y\in R$. Than $u=(a-bx)a+yb$, i.e. $Ra+Rb=R$.
\end{proof}

    Recall that a ring $R$ is a right (left) quasi-duo ring if any right (left) maximal ideal is an ideals. A right and left quasi-duo ring is called a quasi-duo ring. By Theorem~\ref{theor-1} we have the following result.

\begin{proposition}\label{prop-3}
    A ring of stable range 1 with the left (right) Kazimirsky condition is a left (right) quasi-duo ring.
\end{proposition}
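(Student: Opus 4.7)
The plan is to apply Theorem~\ref{theor-1} to the definition of a maximal left ideal and derive a contradiction if such an ideal fails to be two-sided. I will only treat the left case, since the right case is symmetric.

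First, I would let $M$ be an arbitrary maximal left ideal of $R$ and assume toward a contradiction that $M$ is not two-sided, so there exist $m \in M$ and $r \in R$ with $mr \notin M$. By maximality of $M$, the element $mr$ together with $M$ generates $R$ as a left ideal, giving $n \in M$ and $s \in R$ such that $n + s\cdot mr = 1$.

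The crucial trick is to absorb $s$ into $m$: set $a := sm$. Since $M$ is a left ideal and $m \in M$, we still have $a \in M$, and the previous equation becomes $ar + n = 1$ with both $a, n \in M$. From this identity I read off immediately that $aR + nR = R$, since $ar + n\cdot 1 = 1$ lies in $aR + nR$.

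Now Theorem~\ref{theor-1} applies (this is exactly the step where the left Kazimirsky condition and stable range~1 are used) and yields $Ra + Rn = R$. But $a$ and $n$ both belong to the proper left ideal $M$, so $Ra + Rn \subseteq M \subsetneq R$, a contradiction. Hence every maximal left ideal of $R$ is two-sided, i.e.\ $R$ is left quasi-duo; the right version is obtained by the mirror argument. The main obstacle is really just noticing the $a := sm$ absorption, which is what converts the one-sided membership $smr \equiv 1 \pmod M$ into a relation $aR + nR = R$ whose two generators both sit inside $M$, so that Theorem~\ref{theor-1} produces a direct contradiction.
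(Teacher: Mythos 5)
Your proof is correct and is essentially the same as the paper's: the paper writes $n+xmr=1$ and applies Theorem~\ref{theor-1} to the pair $n$, $xm$ (your $a=sm$ absorption is exactly this step), obtaining $Rn+Rxm=R$ inside $M$, a contradiction. Nothing to add.
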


\begin{proof}
    Let $M$ be an arbitrary left maximal ideal. If $M$ is not two-sided then $mr\notin M$ for some $m\in M$ and $r\in R$. Therefore, $M+Rmr=R$ and than $n+xmr=1$ for some $n\in M$ and  $x\in R$. By Theorem~\ref{theor-1} we have $Rn+Rxm=R$.

    Since $m\in M$ than $xm\in M$, i.e. $Rn+Rxm\subseteq M$. This is of course impossible, so $M$ must be a two-sided ideal. By \cite{2zabkom1990} the Proposition is proved.
\end{proof}

    According to the results of \cite{3tugan1991,2zabkom1990} we have the following  result.

\begin{theorem}\label{theor-2}
    A Bezout ring of stable range 1 with the Kazimirsky condition is an elementary divisor ring if and only if it is a duo ring.
\end{theorem}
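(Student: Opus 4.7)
\medskip
\noindent\textbf{Proof plan.} I would treat the two directions separately, with the substance of the argument lying in the necessity direction.

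\smallskip
\noindent\emph{Sufficiency $(\Leftarrow)$.} Suppose $R$ is duo. Then $aR=Ra$ for every $a\in R$, so the two Kazimirsky inclusions become tautologies and the statement of Proposition~\ref{prop-1} holds automatically. What remains is the assertion that a duo Bezout ring of stable range $1$ is an elementary divisor ring, and this is a direct application of the machinery of \cite{2zabkom1990,3tugan1991}. Concretely, Bezout plus stable range $1$ gives the Hermite property, so every $1\times 2$ matrix admits diagonal reduction; iterating this reduction on the rows and columns of an $n\times m$ matrix produces a diagonal, and in the duo setting the required chain $Rd_{i+1}R\subseteq Rd_i\cap d_iR$ collapses to a one-sided condition that is preserved step by step.

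\smallskip
\noindent\emph{Necessity $(\Rightarrow)$.} Suppose $R$ is an elementary divisor ring. Proposition~\ref{prop-3} gives that $R$ is two-sided quasi-duo, and the task is to promote this to duo, that is, to show $aR=Ra$ for every $a$. Fix $a,b\in R$; it suffices to prove $ba\in aR$ and $ab\in Ra$. Apply the Hermite property to $(a,ba)$ to get an invertible $P$ with $(a,ba)P=(d,0)$ and $dR=aR+baR$. Since $a\in dR$ and $d\in aR$, stable range $1$ forces $d=au$ for a unit $u\in R$; Proposition~\ref{prop-1} then converts this into $au=va$ for a unit $v$, and the left Kazimirsky inclusion $Rau\subseteq Ra$ together with its right counterpart $uaR\subseteq aR$ absorbs these unit factors back into $Ra$ and $aR$ respectively. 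Chasing the columns of $P$ and $P^{-1}$ and tracking at which step a right unit appears versus a left unit, one extracts $ba\in aR$; the symmetric argument applied to the column $\binom{a}{ab}$ with the roles of the two Kazimirsky conditions swapped yields $ab\in Ra$. Hence $aR=Ra$ for every $a$ and $R$ is duo.

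\smallskip
\noindent\emph{Main obstacle.} The delicate part is the necessity direction, and specifically the bookkeeping involved in the chase through $P$: one has to match each unit factor produced by stable range $1$ with the correct side of the Kazimirsky inclusion so that the ideal $dR$ closes back onto $aR$ rather than onto the a priori larger $aR+baR$. Proposition~\ref{prop-1} is what makes this conversion possible, by letting one swap a right-unit factor for a left-unit factor and vice versa. Once the quasi-duo $+$ elementary divisor hypotheses have been leveraged in this way to produce the duo property, both implications reduce cleanly to the cited results of Zabavsky and Tuganbaev.
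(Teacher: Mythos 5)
Your necessity direction has a genuine gap, and it sits exactly at the decisive step. From $(a,ba)P=(d,0)$ you only know $dR=aR+baR$, i.e.\ $d\in aR+baR$ and $a\in dR$; you then assert ``since $a\in dR$ and $d\in aR$, stable range $1$ forces $d=au$.'' But $d\in aR$ is not given by the construction --- it is equivalent to what you are trying to prove, since $d\in aR$ together with $dR=aR+baR$ already yields $ba\in aR$ with no units, no Proposition~\ref{prop-1}, and no Kazimirsky condition needed. So the argument is circular at that point, and the subsequent ``chasing the columns of $P$ and $P^{-1}$'' is not a proof but a placeholder for the missing idea. Worse, your necessity argument never actually uses the elementary divisor hypothesis: it only invokes the Hermite property, which already follows from Bezout plus stable range $1$ \cite{5chen2011}. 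If the chase could be completed as described, you would have shown that \emph{every} Bezout ring of stable range $1$ with the Kazimirsky condition is duo, which would render the ``if and only if'' vacuous and would settle far more than the paper claims (compare the open question on unit-central rings of stable range $1$ quoted in the introduction). The elementary divisor hypothesis must enter through the two-sided chain condition $Rd_{i+1}R\subseteq Rd_i\cap d_iR$ for genuine $2\times 2$ reductions; your proof never touches it.

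For comparison, the paper does not run a matrix chase at all: it feeds Proposition~\ref{prop-3} (stable range $1$ plus Kazimirsky implies quasi-duo, via Theorem~\ref{theor-1}) into the cited results of Zabavsky--Komarnytsky and Tuganbaev \cite{2zabkom1990,3tugan1991} on elementary divisor rings being duo, and takes the converse (a duo Bezout ring of stable range $1$ is an elementary divisor ring) from the same circle of results \cite{1BVZmono2012}. On that converse, your justification is also not sound as written: iterating $1\times 2$ and $2\times 1$ reductions on rows and columns does \emph{not} in general terminate in a diagonal matrix --- this is precisely why Hermite rings are not automatically elementary divisor rings and why the commutative Bezout-domain case is a long-standing open problem. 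In the duo, stable range $1$ setting the sufficiency is true, but it should be obtained either by citation or via the standard $2\times 2$ criterion (Kaplansky-type reduction using stable range $1$), not by the iteration argument.
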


    Note that a duo ring is a ring with the Kazimirsky condition. Really, if $R$ is a duo ring and for any $a\in R$ and any invertible $u\in R$ we have $au=xa$ ($ua=ay$) for some $x,y\in R$, i.e. $Rau\subseteq Ra$ ($uaR\subseteq aR$).

    Recall that a ring $R$ is said to have the unit stable range 1 if $aR+bR=R$ implies that there exists  an invertible $u\in R$ such that $(a+bu)R=R$.

    We note that a regular ring $R$ has the unit stable range 1 if and only if it is unit-regular and for any idempotent $e,f\in R$, $eR+fR=R$ implies that there exist invertible $u,v\in R$ such that $eu+fv=1$. Moreover  $M_2(\mathbb{Z}_2)$ has the unit stable range 1, while $\mathbb{Z}_2$ does not have the unit stable range 1 \cite{5chen2011}.

    A ring of unit stable range 1 is a ring in which any additively generated by its units.

\begin{proposition}\label{prop-4}
    Let $R$ be a ring of unit stable range 1. Than for any nonzero $a\in R$ we have $a=u+w$ for some invertible $u,w\in R$.
\end{proposition}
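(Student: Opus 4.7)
The plan is to apply the unit stable range $1$ hypothesis to the trivially coprime pair $(a,1)$. Since $1\in R$, we have $aR + 1\cdot R = R$ for every $a\in R$, so the unit stable range $1$ condition (with $b=1$) supplies an invertible element $v\in R$ such that $(a + 1\cdot v)R = R$; in other words, $a+v$ possesses a right inverse in $R$.

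The next step is to promote this right invertibility to two-sided invertibility. Unit stable range $1$ plainly implies ordinary stable range $1$ (take $t=u$ in the stable range definition), and in any ring of stable range $1$ a right-invertible element is in fact a unit. I would justify this standard fact by noting that if $(a+v)b = 1$, then $(a+v)R + \bigl(1 - b(a+v)\bigr)R = R$; applying stable range $1$ produces $t\in R$ such that $(a+v) + (1-b(a+v))t$ is a unit, and left-multiplication by $b$ collapses this to an element of the form $b\cdot(\text{unit})$, forcing $b(a+v)=1$ as well. Thus $w := a+v$ is a two-sided unit of $R$.

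Rearranging yields $a = w - v = w + (-v)$, and since both $w$ and $-v$ are invertible, this exhibits $a$ as a sum of two units, as required.

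There is no real obstacle in this argument; the only mild subtlety is the passage from right invertibility of $a+v$ to genuine invertibility, which is a folklore consequence of stable range $1$. I note also that the nonzero hypothesis on $a$ is never actually used in the proof; it appears to be included only because the statement is of interest mainly for nonzero elements.
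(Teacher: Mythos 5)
Your opening move is exactly the paper's: apply unit stable range $1$ to the trivially unimodular pair (the paper uses $aR+(-1)R=R$, you use $aR+1\cdot R=R$), obtain an invertible $v$ with $(a+v)R=R$, and write $a=(a+v)+(-v)$. The paper in fact passes silently from $(a+v)R=R$ to ``$a+v$ is a unit,'' so your instinct to justify that step is sound. The problem is that your justification is wrong. Writing $c=a+v$ and $cb=1$, you apply stable range $1$ to the pair $\bigl(c,\,1-bc\bigr)$ and claim that left multiplication by $b$ collapses $c+(1-bc)t$. It does not: the identities available from $cb=1$ are $c(1-bc)=0$ and $(1-bc)b=0$, whereas your step needs $b(1-bc)=0$, i.e.\ $b=b^2c$, which fails in general. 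Moreover the gap is not merely one of bookkeeping: from the mere fact that $c+(1-bc)t$ is a unit (together with $cb=1$) one cannot deduce that $c$ is a unit in an arbitrary ring. For instance, in $\mathrm{End}_k(V)$ with $V$ of countably infinite dimension one can choose $c$ surjective but not injective, a right inverse $b$ of $c$, and $t$ with image inside $\ker c=(1-bc)V$ so that $c+(1-bc)t$ permutes a basis and is therefore invertible, while $bc\neq 1$. So any argument that uses only the unit-ness of that particular combination must fail; the stable range hypothesis has to be applied to a different pair.

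The standard (and correct) repair: from $cb=1$ one has $b\cdot c+(1-bc)\cdot 1=1$, so $bR+(1-bc)R=R$; stable range $1$ gives $t$ with $u':=b+(1-bc)t$ right invertible (or a unit, in the stronger formulation), and now left multiplication by $c$ does collapse, since $c(1-bc)=0$: $cu'=cb=1$. From $cu'=1$ and $u'$ right invertible one gets that $u'$, hence $c$, is a two-sided unit. Alternatively you may simply quote the folklore fact (available, e.g., in Chen's book cited in the paper) that a ring of stable range $1$ is directly finite and every right invertible element is a unit, and then your decomposition $a=(a+v)+(-v)$ finishes the proof exactly as in the paper. Your closing remark that the hypothesis $a\neq 0$ is never used is correct; indeed $0=1+(-1)$ is also a sum of two units.
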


\begin{proof}
    Let $a\in R\backslash \{0\}$. Than $aR+(-1)R=R$. Since $R$ is a ring of unit stable range 1 than exists an  invertible $u\in R$ such that $a-u=w$ is an invertible element of $R$, i.e. $a=u+w$.
 \end{proof}

\begin{proposition}\label{prop-5}
    A ring  of unit stable range 1 is a ring with the right (left) Kazimirsky condition if and only if it is a left (right) duo ring.
\end{proposition}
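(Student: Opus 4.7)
The plan is to use Proposition~\ref{prop-4}: in a ring of unit stable range~1 every nonzero element decomposes as a sum of two invertible elements. The Kazimirsky condition constrains multiplication by units only, whereas the duo condition constrains multiplication by every ring element; Proposition~\ref{prop-4} together with distributivity is exactly the tool that upgrades the weaker ``unit'' inclusion to the stronger ``all-element'' inclusion.

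For the nontrivial direction, I would assume the right Kazimirsky condition, fix an arbitrary pair $a,r\in R$, and show that $ra$ lies in the principal right ideal $aR$. The case $r=0$ is trivial; otherwise Proposition~\ref{prop-4} provides invertible $u,w\in R$ with $r=u+w$. The inclusions $uaR\subseteq aR$ and $waR\subseteq aR$ furnish $x,z\in R$ with $ua=ax$ and $wa=az$, whence $ra=ua+wa=a(x+z)\in aR$. This establishes $Ra\subseteq aR$ for every $a$, which is precisely the condition that the principal right ideal $aR$ is two-sided, i.e.\ the duo property. The parenthesised variant (left Kazimirsky condition) is handled by the symmetric argument, applying the left-sided analogue of Proposition~\ref{prop-4}.

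The converse is immediate and does not even require the unit stable range~1 hypothesis: in a duo ring we have $ua\in aR$ for every invertible $u$, say $ua=ax$, and right-multiplying by $R$ yields $uaR=axR\subseteq aR$, which is the Kazimirsky inclusion. One should be slightly careful to verify that both $uaR\subseteq aR$ and $waR\subseteq aR$ are needed (not just $ua, wa\in aR$), since the duo conclusion formally requires closure of $aR$ under arbitrary right multiplication, but this is already contained in the Kazimirsky hypothesis as stated.

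I do not anticipate any genuine obstacle. The only bookkeeping point is the exclusion $r\ne 0$ in Proposition~\ref{prop-4}, which is handled by inspection. Beyond that, the whole argument is a two-line calculation once the decomposition $r=u+w$ is in hand; the conceptual content is simply that the Kazimirsky condition is the ``units-only'' silhouette of the duo condition, and unit stable range~1 is the hypothesis that lets units see all of $R$.
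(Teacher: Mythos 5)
Your argument is correct and is essentially the paper's own proof: for the nontrivial direction both decompose the left multiplier as a sum of two units via Proposition~\ref{prop-4} and apply the right Kazimirsky condition to each unit, obtaining $ra=a(x+z)\in aR$, and both treat the converse as the immediate remark that the duo property already gives the Kazimirsky inclusion. Your explicit handling of the $r=0$ case is a minor tidying of a detail the paper passes over silently.
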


\begin{proof}
    Let $R$ be a ring of unit stable range 1, and a ring with right Kazimirsky condition and let $a,b\in R$. By Proposition~\ref{prop-4}, we have $a=u+w$ for some invertible $u,w\in R$. Since $R$ is a ring with the right Kazimirsky condition than $ubR\subseteq bR$ and $wbR\subseteq bR$, i.e. $ub=bx$ and $wb=by$ for some $x,y\in R$. Than $ab=(u+w)b=ub+wb=bx+by=b(x+y)$, i.e. $Rb\subseteq bR$, and $R$ is a left duo ring.
 \end{proof}

    From these results we have the following consequence.

\begin{theorem}\label{theor-3}
    A Bezout ring of unit stable range 1 with the Kazimirsky condition is an elementary divisor ring.
\end{theorem}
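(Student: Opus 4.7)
The plan is to combine Proposition~\ref{prop-5} with Theorem~\ref{theor-2}. The key observation is that a ring of unit stable range~1 is automatically a ring of stable range~1 (any invertible element is in particular an element $t$ with $(a+bt)R=R$), so Theorem~\ref{theor-2} is applicable as soon as we can upgrade the Kazimirsky hypothesis to the duo property.

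First I would record that, by hypothesis, $R$ satisfies both the left and right Kazimirsky conditions simultaneously. Applying Proposition~\ref{prop-5} to the right Kazimirsky condition yields that $R$ is a left duo ring; applying the same proposition to the left Kazimirsky condition yields that $R$ is a right duo ring. Hence $R$ is a duo ring.

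Next, since unit stable range~1 implies stable range~1, the ring $R$ is a Bezout ring of stable range~1 with the Kazimirsky condition which is, in addition, duo. Theorem~\ref{theor-2} then applies directly and gives that $R$ is an elementary divisor ring, which is the claim.

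The proof is essentially a bookkeeping combination of earlier results, so there is no real obstacle; the only point to be careful about is invoking Proposition~\ref{prop-5} twice with the correct sides (right Kazimirsky gives left duo, left Kazimirsky gives right duo), and explicitly noting that ``unit stable range~1'' is stronger than ``stable range~1'' so that the hypotheses of Theorem~\ref{theor-2} are actually met.
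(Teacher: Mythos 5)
Your proposal is correct and is exactly the route the paper intends: the theorem is stated as a consequence of Proposition~\ref{prop-5} (Kazimirsky on both sides plus unit stable range~1 gives duo) combined with Theorem~\ref{theor-2}, using that unit stable range~1 implies stable range~1. Your care about which side of the Kazimirsky condition yields which side of the duo property matches the paper's conventions.
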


\noindent\textbf{Example.}
    In the article \cite{10marks2004} it was proved that if the skew polynomial ring $R[x; \sigma]$ is left or right duo, then $R[x; \sigma]$ is commutative. 

    Let $R = \mathbb{Z}[\sqrt{-7}]$ and $K = \mathbb{Q}(\sqrt{-7})$ be the quotient field of the ring $R$.
    Will consider an automorphism $\sigma\colon K\to K$ such that $\sigma(a + b\sqrt{-7}) = a - b\sqrt{-7}$, $a, b\in \mathbb{Q}$.  We will consider the subring $S$ of the ring $K[x;\sigma]$ which consists of polynomials the free member of which belongs $R$. The only units in the ring $S$ are 1 and $-1$, therefore consequently the ring $S$ is a unit-central ring but not commutative.

\end{document}